\newtheorem{lemma}{Lemma}
\newtheorem{theorem}{Theorem}
\newcommand{\RR}{{\mathbb R}}
\newcommand{\CC}{{\mathbb C}}
\newcommand{\HH}{{\mathbb H}}
\newcommand{\KK}{{\mathbb K}}
\newcommand{\OO}{{\mathbb O}}
\renewcommand{\AA}{\textbf{A}}
\newcommand{\BB}{\textbf{B}}
\newcommand{\II}{\textbf{I}}
\newcommand{\XX}{\textbf{X}}
\newcommand{\YY}{\textbf{Y}}
\newcommand{\ZZ}{\textbf{0}}
\newcommand{\eps}{\hbox{\boldmath$\epsilon$}}
\newcommand{\bphi}{\hbox{\boldmath$\phi$}}
\newcommand{\Xt}{\widetilde{\XX}}
\newcommand{\AAA}{{\cal A}}
\newcommand{\BBB}{{\cal B}}
\newcommand{\QQQ}{{\cal Q}}
\newcommand{\III}{{\cal I}}
\newcommand{\XXX}{{\cal X}}
\newcommand{\YYY}{{\cal Y}}
\newcommand{\HHH}{\mathrm{H}_3(\OO)}
\newcommand{\Pc}{\hbox{\boldmath${\cal P}$}}
\newcommand{\isom}{\cong}
\newcommand{\tr}{\textrm{tr}}
\renewcommand{\Im}{\textrm{Im}}
\newcommand{\so}{\mathfrak{so}}
\newcommand{\su}{\mathfrak{su}}
\renewcommand{\sl}{\mathfrak{sl}}	% \sl used for slant
\renewcommand{\sp}{\mathfrak{sp}}	% \sp used for ^
\renewcommand{\aa}{\mathfrak{a}}
\newcommand{\cc}{\mathfrak{c}}
\newcommand{\dd}{\mathfrak{d}}
\newcommand{\ee}{\mathfrak{e}}
\newcommand{\ff}{\mathfrak{f}}
\renewcommand{\gg}{\mathfrak{g}}	% \gg used for >>
\newcommand{\SO}{\hbox{SO}}
\newcommand{\SU}{\hbox{SU}}
\newcommand{\Sp}{\hbox{Sp}}
\newcommand{\EE}{E}
\begin{document}

%**end of header

\title{\vspace{-0.5in}
{\bfseries\boldmath A Symplectic Representation of $\EE_7$}
}

\author{
	Tevian Dray \\[-2.5pt]
	\normalsize
	\textit{Department of Mathematics, Oregon State University,
		Corvallis, OR  97331, USA} \\[-2.5pt]
	\normalsize
	\texttt{tevian@math.oregonstate.edu} \\
	\and
	Corinne A. Manogue \\[-2.5pt]
	\normalsize
	\textit{Department of Physics, Oregon State University,
		Corvallis, OR  97331, USA} \\[-2.5pt]
	\normalsize
	\texttt{corinne@physics.oregonstate.edu} \\
	\and
	Robert A. Wilson \\[-2.5pt]
	\normalsize
	\textit{School of Mathematical Sciences, Queen Mary,
		University of London, London E1 4NS, UK} \\[-2.5pt]
	\normalsize
	\texttt{R.A.Wilson@qmul.ac.uk} \\
}

\date{\normalsize October 26, 2013}

\maketitle

\vspace{-15pt}
\begin{abstract}
We explicitly construct a particular real form of the Lie algebra $\ee_7$ in
terms of symplectic matrices over the octonions, thus justifying the
identifications $\ee_7\isom\sp(6,\OO)$ and, at the group level,
$\EE_7\isom\Sp(6,\OO)$.  Along the way, we provide a geometric description of
the minimal representation of $\ee_7$ in terms of rank 3 objects called
\textit{cubies}.
\end{abstract}

\section{Introduction}
\label{intro}

\begin{table}
\small
\begin{center}
\begin{tabular}{|c|c|c|c|c|}
\hline
&$\RR$&$\CC$&$\HH$&$\OO$\\\hline
$\RR'$
&$\su(3,\RR)$&$\su(3,\CC)$&$\cc_3\isom\su(3,\HH)$&$\ff_4\isom\su(3,\OO)$\\
\hline
$\CC'$&
$\sl(3,\RR)$&$\sl(3,\CC)$&$\aa_{5(-7)}\isom\sl(3,\HH)$
  &$\ee_{6(-26)}\isom\sl(3,\OO)$\\
\hline
$\HH'$&
$\cc_{3(3)}\isom\sp(6,\RR)$&$\su(3,3,\CC)$&$\dd_{6(-6)}$&$\ee_{7(-25)}$\\
\hline
$\OO'$&
$\ff_{4(4)}$&$\ee_{6(2)}$&$\ee_{7(-5)}$&$\ee_{8(-24)}$\\
\hline
\end{tabular}
\end{center}
\caption{The ``half-split'' $3\times3$ magic square of Lie algebras.}
\label{3x3}
\end{table}

The Freudenthal-Tits magic square~\cite{Freudenthal,Tits} of Lie algebras
provides a parametrization in terms of division algebras of a family of Lie
algebras that includes all of the exceptional Lie algebras except $\gg_2$.
The ``half-split'' version of the magic square, in which one of the division
algebras is split, is given in Table~\ref{3x3}.  The interpretation of the Lie
algebra real forms appearing in the first two rows of the magic square as
$\su(3,\KK)$ and $\sl(3,\KK)$ has been discussed in~\cite{Denver,York}; see
also~\cite{Sudbery,SudberyBarton}.  Freudenthal~\cite{FreudenthalE7} provided
an algebraic description of the symplectic geometry of $\ee_7$, and Barton \&
Sudbery~\cite{SudberyBarton} advanced this description to the Lie algebra
level by interpreting the third row of the magic square as $\sp(6,\KK)$.  We
continue this process here, by providing a natural symplectic interpretation
of the minimal representation of $\ee_7=\ee_{7(-25)}$.

\section{\boldmath Freudenthal's Description of $\ee_7$}
\label{e7Freud}

Let $\XXX,\YYY\in\HHH$ be elements of the Albert algebra, that is,
$3\times3$ Hermitian matrices whose components are octonions.  There are two
natural products on the Albert algebra, namely the \textit{Jordan product}
\begin{equation}
\XXX\circ\YYY = \frac12 \Bigl( \XXX\YYY + \YYY\XXX \Bigr)
\end{equation}
and the \textit{Freudenthal product}
\begin{equation}
\XXX*\YYY
  = \XXX\circ\YYY
	- \frac12\Bigl( (\tr\XXX)\YYY + (\tr\YYY)\XXX \Bigr)
	+ \frac12\Bigl( (\tr\XXX)(\tr\YYY)-\tr(\XXX\circ\YYY) \Bigr) \III
\end{equation}
which can be thought of as a generalization of the cross product on $\RR^3$
(with the trace of the Jordan product playing the role of the dot product).

The Lie algebra $\ee_6=\ee_{6(-26)}$ acts on the Albert algebra $\HHH$.
The generators of $\ee_6$ fall into one of three categories; there are 26
\textit{boosts}, 14 \textit{derivations} (elements of $\gg_2$), and 38
remaining \textit{rotations} (the remaining generators of $\ff_4$).  For both
boosts and rotations, $\phi\in\ee_6$ can be treated as a $3\times3$,
tracefree, octonionic matrix; boosts are Hermitian, and rotations are
anti-Hermitian.  Such matrices $\phi\in\ee_6$ act on the Albert algebra via
\begin{equation}
\XXX \longmapsto \phi\XXX + \XXX\phi^\dagger
\label{e6act}
\end{equation}
where $\dagger$ denotes conjugate transpose (in $\OO$).  Since the derivations
can be obtained by successive rotations (or boosts) through \textit{nesting},
it suffices to consider the boosts and rotations, that is, to consider matrix
transformations.
\footnote{Since all rotations can be obtained from pairs of boosts, it would
be enough to consider boosts alone.}

The dual representation of $\ee_6$ is formed by the duals $\phi'$ of each
$\phi\in\ee_6$, defined via
\begin{equation}
\tr\bigl(\phi(\XXX)\circ \YYY\bigr) = -\tr\bigl(\XXX\circ \phi'(\YYY)\bigr)
\end{equation}
for $\XXX,\YYY\in\HHH$.  It is easily checked that $\phi'=\phi$ on
rotations, but that $\phi'=-\phi$ on boosts.  Thus,
\begin{equation}
\phi' = -\phi^\dagger
\label{phiadj}
\end{equation}
for both boosts and rotations.

We can regard $\ee_7$ as the conformal algebra associated with $\ee_6$, since
$\ee_7$ consists of the 78 elements of~$\ee_6$, together with 27 translations,
27 conformal translations, and a dilation.  In fact,
Freudenthal~\cite{FreudenthalE7} represents elements of $\ee_7$ as
\begin{equation}
\Theta = (\phi,\rho,\AAA,\BBB)
\label{ThetaDef}
\end{equation}
where $\phi\in\ee_6$, $\rho\in\RR$ is the dilation, and $\AAA,\BBB\in\HHH$ are
elements of the Albert algebra, representing (null) translations.

What does $\Theta$ act on?  Freudenthal~\cite{FreudenthalE7} explicitly
constructs the minimal representation of~$\ee_7$, which consists of elements
of the form
\begin{equation}
\Pc=(\XXX,\YYY,p,q)
\label{Pcdef}
\end{equation}
where $\XXX,\YYY\in\HHH$, and $p,q\in\RR$.
% We will refer to $\Pc$ as a ``Freudenthal triple''.
But how are we to visualize these elements?
Freudenthal does tell us that $\Theta$ acts on $\Pc$ via
\begin{align}
  \XXX &\longmapsto \phi(\XXX) + \frac13\,\rho\,\XXX + 2 \BBB*\YYY + \AAA\,q
\label{FreudX}\\
  \YYY &\longmapsto 2 \AAA*\XXX + \phi'(\YYY) - \frac13\,\rho\,\YYY + \BBB\,p
\label{FreudY}\\
  p &\longmapsto \tr(\AAA\circ \YYY) - \rho\,p
\label{pFreud}\\
  q &\longmapsto \tr(\BBB\circ \XXX) + \rho\,q 
\label{qFreud}
\end{align}
But again, how are we to visualize this action?

We conclude this section by giving two further constructions due to
Freudenthal~\cite{FreudenthalE7}.  There is a ``super-Freudenthal'' product~$*$
taking elements $\Pc$ of the minimal representation of $\ee_7$ to elements of
$\ee_7$, given by
\footnote{We use $*$ to denote this ``super-Freudenthal'' product because of
its analogy to the Freudenthal product~$*$, with which there should be no
confusion.  Neither of these products is the same as the Hodge dual map, also
denoted $*$, used briefly in Sections~\ref{sp4} and~\ref{cubies}.}
\begin{equation}
\Pc*\Pc = (\phi,\rho,\AAA,\BBB)
\label{PstarP0}
\end{equation}
where
\begin{align}
  \phi &= \langle \XXX,\YYY\rangle \label{PstarP1}\\
  \rho &= -\frac14 \tr\bigl(\XXX\circ\YYY - pq\,\III\bigr) \\
  \AAA &= -\frac12 \bigl(\YYY*\YYY - p\,\XXX\bigr) \\
  \BBB &= \frac12 \bigl(\XXX*\XXX - q\,\YYY\bigr) \label{PstarP4}
\end{align}
where
\begin{equation}
\langle X,Y\rangle Z = Y\circ(X\circ Z) - X\circ(Y\circ Z)
			- (X\circ Y)\circ Z + \frac13 \tr(X\circ Y) Z
\end{equation}
Finally, $\ee_7$ preserves the quartic invariant
\begin{equation}
J = \tr\bigl((\XXX*\XXX)\circ(\YYY*\YYY)\bigr) - p\det\XXX - q\det\YYY
	-\frac14 \bigl( \tr(\XXX\circ\YY)-pq \bigr)^2
\label{quartic}
\end{equation}
which can be constructed using $\Pc*\Pc$.

\section{\boldmath The Symplectic Structure of $\so(k+2,2)$}
\label{sp4}

\begin{table}
\small
\begin{center}
\begin{tabular}{|c|c|c|c|c|}
\hline
&$\RR$&$\CC$&$\HH$&$\OO$\\\hline
$\RR'$&
 $\so(2)\isom\su(2,\RR)$&$\so(3)\isom\su(2,\CC)$&
 $\so(5)\isom\su(2,\HH)$&$\so(9)\isom\su(2,\OO)$\\
\hline
$\CC'$&
 $\so(2,1)\isom\sl(2,\RR)$&$\so(3,1)\isom\sl(2,\CC)$&
 $\so(5,1)\isom\sl(2,\HH)$&$\so(9,1)\isom\sl(2,\OO)$\\
\hline
$\HH'$&
 $\so(3,2)\isom\sp(4,\RR)$&$\so(4,2)\isom\su(2,2,\CC)$&
 $\so(6,2)$&$\so(10,2)$\\
\hline
$\OO'$&
 $\so(5,4)$&$\so(6,4)$&$\so(8,4)$&$\so(12,4)$\\
\hline
\end{tabular}
\end{center}
\caption{The ``half-split'' $2\times2$ magic square of Lie algebras.}
\label{2x2}
\end{table}

An analogous problem has been analyzed for the $2\times2$ magic square, which
is shown in Table~\ref{2x2}; the interpretation of the first two rows was
discussed in~\cite{Lorentz}; see also~\cite{Sudbery}.  Dray, Huerta, and Kincaid
showed first~\cite{SO42} (see also~\cite{JoshuaThesis}) how to relate
$\SO(4,2)$ to $\SU(2,\HH'\otimes\CC)$, and later~\cite{2x2} extended
their treatment to the full $2\times2$ magic square of Lie groups in
Table~\ref{2x2}.  In the third row, their Clifford algebra description of
$\SU(2,\HH'\otimes\KK)$ is equivalent to a symplectic description as
$\Sp(4,\KK)$, with $\KK=\RR,\CC,\HH,\OO$.

Explicitly, they represent $\so(k+2,2)$, where $k=|\KK|=1,2,4,8$, in terms of
actions on $4\times4$ matrices of the form
\begin{equation}
P_0 =
\begin{pmatrix} p\,\II & \XX \\ -\Xt & q\,\II \end{pmatrix}
\label{Pdef}
\end{equation}
where $\XX$ is a $2\times2$ Hermitian matrix over $\KK$, representing
$\so(k+1,1)$, $p,q\in\RR$, $\II$ denotes the $2\times2$ identity matrix, and
tilde denotes trace-reversal, that is, $\Xt=\XX-\tr(\XX)\,\II$.  The matrix
$P_0$ can be thought of as the upper right $4\times4$ block of an $8\times8$
Clifford algebra representation, and the action of $\so(k+2,2)$ on $P_0$ is
obtained as usual from (the restriction of) the quadratic elements of the
Clifford algebra.  The generators $A\in\so(k+2,2)$ can be chosen so that the
action takes the form
\begin{equation}
P_0 \longmapsto A P_0 \pm P_0 A
\label{soact}
\end{equation}
where the case-dependent signs are related to the restriction from $8\times8$
matrices to $4\times4$ matrices.  Following Sudbery~\cite{Sudbery}, we define
the elements $A$ of the symplectic Lie algebra $\sp(4,\KK)$ by the condition
\begin{equation}
A \Omega + \Omega A^{\dagger} = 0
\label{sp4def}
\end{equation}
where
\begin{equation}
\Omega = \begin{pmatrix} \ZZ& \II\\ -\II& \ZZ \end{pmatrix}
\end{equation}
Solutions of~(\ref{sp4def}) take the form
\footnote{Care must be taken with the isometry algebra of $\Im(\KK)$,
corresponding to $\Im(\tr(\bphi))\ne0$.  Such elements can however also be
generated as commutators of elements of the form~(\ref{Ablock}), so we do not
consider them separately.}
\begin{equation}
A = 
\begin{pmatrix}
  \bphi-\frac12\,\rho\,\II& \AA\\
\noalign{\medskip}
  \BB& -\bphi^\dagger+\frac12\,\rho\,\II
\end{pmatrix}
\label{Ablock}
\end{equation}
where both $\AA$ and $\BB$ are Hermitian, $\tr(\bphi)=0$, and $\rho\in\RR$.
But generators of \hbox{$\so(k+2,2)$} take exactly the same form: $\bphi$
represents an element of $\so(k+1,1)$, $\AA$ and $\BB$ are (null)
translations, and $\rho$ is the dilation.  Direct computation shows that the
generators $A$ of \hbox{$\so(k+2,2)$} do indeed satisfy~(\ref{sp4def}); the
above construction therefore establishes the isomorphism
\begin{equation}
\so(k+2,2) \isom \sp(4,\KK)
\end{equation}
as claimed.

We can bring the representation~(\ref{Pdef}) into a more explicitly symplectic
form by treating $\XX$ as a vector-valued 1-form, and computing its Hodge
dual ${*}\XX$, defined by
\begin{equation}
{*}\XX = \XX\eps
\end{equation}
where
\begin{equation}
\eps = \begin{pmatrix} 0& 1\\ -1& 0 \end{pmatrix}
\end{equation}
is the Levi-Civita tensor in two dimensions.  Using the identity
\begin{equation}
\eps \XX \eps = \Xt{}^T
\end{equation}
we see that $P=P_0\,\II\otimes\eps$ takes the form
\begin{equation}
P = \begin{pmatrix} p\,\eps& {*}\XX\\ -({*}\XX)^T& q\,\eps \end{pmatrix}
\label{squarie}
\end{equation}
which is antisymmetric, and whose block structure is shown in
Figure~\ref{square}.  The diagonal blocks, labeled $00$ and $11$, are
antisymmetric, and correspond to $p$ and $q$, respectively, whereas the
off-diagonal blocks, labeled $01$ and $10$, contain equivalent information,
corresponding to ${*}\XX$.  Note that ${*}\XX$ does not use up all of the
degrees of freedom available in an off-diagonal block; the set of \textit{all}
antisymmetric $4\times4$ matrices is \textit{not} an irreducible
representation of $\sp(4,\KK)$.

The action of $\sp(4,\KK)$ on $P$ is given by
\begin{equation}
P \longmapsto AP + PA^T
\label{sp4act}
\end{equation}
for $A\in\sp(4,\KK)$, that is, for $A$ satisfying~(\ref{sp4def}).
\footnote{Thus,~(\ref{sp4act}) can be used if desired to determine the signs
in~(\ref{soact}).}
When working over $\KK=\RR$ or $\CC$, the action~(\ref{sp4act}) is just the
antisymmetric square
\begin{equation}
v \wedge w \longmapsto Av \wedge w + v \wedge Aw
\label{wedge2}
\end{equation}
of the natural representation $v\longmapsto Av$, with $v\in\KK^4$.
%but over $\HH$ or $\OO$, $Aw$ must be replace by $(w^T A^T)^T$

\begin{figure}
\centering
\hfill
\includegraphics[height=1in]{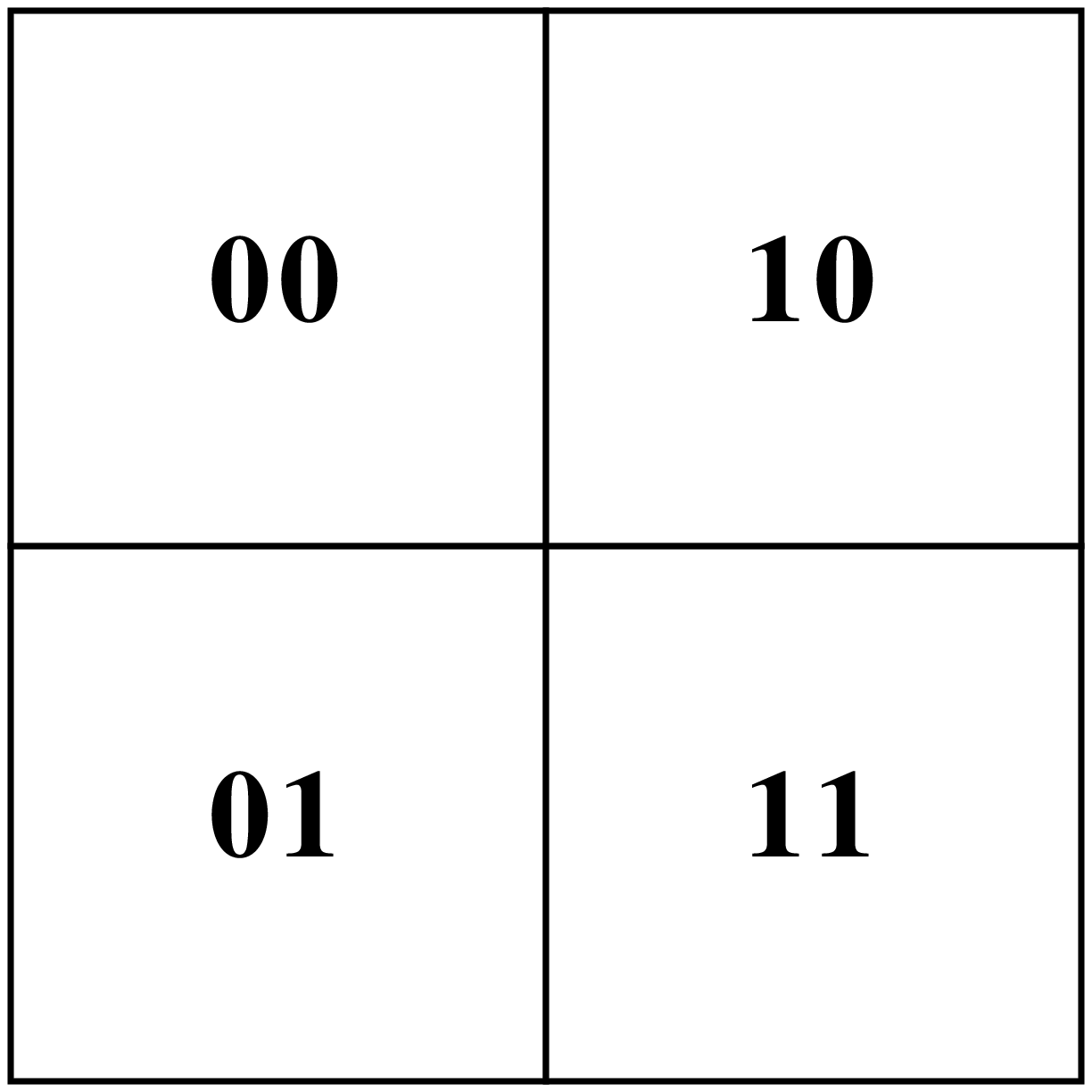}
\hfill
\includegraphics[height=1in]{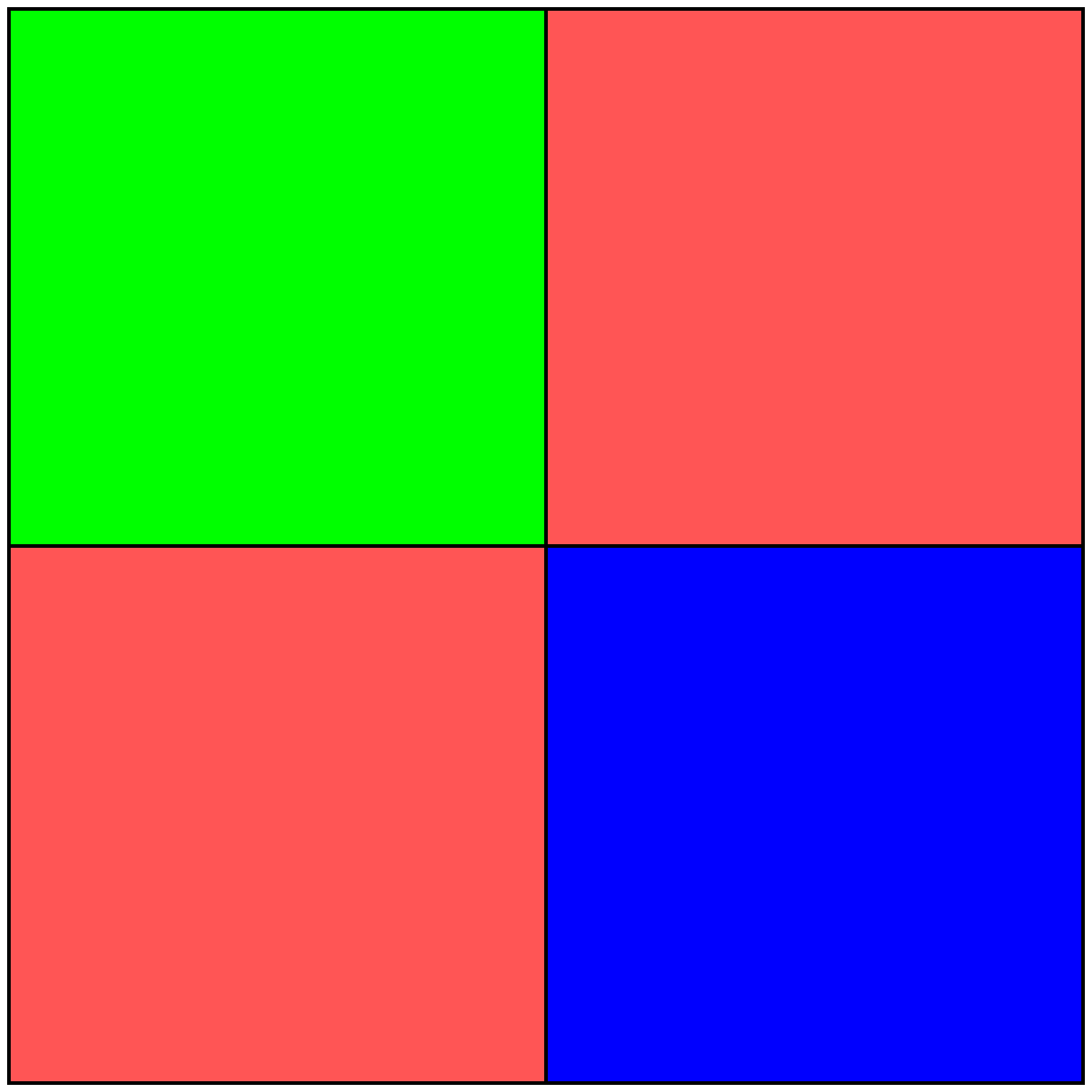}
\hfill\null
\caption{The block structure of a $4\times4$ antisymmetric matrix in terms of
$2\times2$ blocks.  A binary labeling of the blocks is shown on the left; on
the right, blocks with similar shading contain equivalent information.}
\label{square}
\end{figure}

\section{Cubies}
\label{cubies}

Before generalizing the above construction to the $3\times3$ magic square, we
first consider the analog of ${*}\XX$.  Let $\XXX\in\HHH$ be an element of the
Albert algebra, which we can regard as a vector-valued 1-form with components
$\XXX_a{}^b$, with $a,b\in\{1,2,3\}$.  The Hodge dual ${*}\XXX$ of $\XXX$ is a
vector-valued 2-form with components
\begin{equation}
(*\XXX)_{abc}=\XXX_a{}^m\epsilon_{mbc}
\label{Hodge}
\end{equation}
where $\epsilon_{abc}$ denotes the Levi-Civita tensor in three dimensions,
that is, the completely antisymmetric tensor satisfying
\begin{equation}
\epsilon_{123} =1
\end{equation}
and where repeated indices are summed over.  We refer to ${*}\XXX$ as a
\textit{cubie}.
We also introduce the dual of $\epsilon_{abc}$, the completely antisymmetric
tensor $\epsilon^{abc}$ satisfying
\begin{equation}
\epsilon_{mns}\epsilon^{mns} = 6
\label{eps6}
\end{equation}
and note the further identities
\begin{align}
\epsilon_{amn}\,\epsilon^{bmn} &= 2\,\delta_a{}^b \\
\epsilon_{abm}\,\epsilon^{cdm}
  &= \delta_a{}^c \,\delta_b{}^d - \delta_a{}^d \,\delta_b{}^c \\
\epsilon_{abc}\,\epsilon^{def}
  &= \delta_a{}^d\,\delta_b{}^e\,\delta_c{}^f
	+ \delta_b{}^d\,\delta_c{}^e\,\delta_a{}^f
	+ \delta_c{}^d\,\delta_a{}^e\,\delta_b{}^f \nonumber\\
  &\qquad - \delta_a{}^d\,\delta_c{}^e\,\delta_b{}^f
	- \delta_b{}^d\,\delta_a{}^e\,\delta_c{}^f
	- \delta_c{}^d\,\delta_a{}^e\,\delta_b{}^f
\label{epssq}
\end{align}
In particular, we have
\begin{equation}
({*}\XXX)_{amn}\epsilon^{bmn} = 2 \XXX_a{}^b
\label{sinv}
\end{equation}

Operations on the Albert algebra can be rewritten in terms of cubies.  For
instance,
\begin{align}
\tr\XXX &= \frac12\, \XXX_{abc} \,\epsilon^{abc} \\
\bigl({*}(\XXX\,\YYY)\bigr)_{abc}
  &= \frac12\, \XXX_{amn}\,\YYY_{pbc} \,\epsilon^{mnp} \\
\bigl({*}(\XXX\circ\YYY)\bigr)_{abc} &=
	\frac14 \bigl( \XXX_{amn}\,\YYY_{pbc} + \YYY_{amn}\,\XXX_{pbc} \bigr)
	\,\epsilon^{mnp} \\
\tr(\XXX\circ\YYY)
  &= \frac18 \bigl( \XXX_{amn}\,\YYY_{pbc} + \YYY_{amn}\,\XXX_{pbc} \bigr)
	\,\epsilon^{mnp}\,\epsilon^{bca} \nonumber\\
  &= \frac18 \bigl( \XXX_{amn}\,\YYY_{pbc} + \YYY_{pbc}\,\XXX_{amn} \bigr)
	\,\epsilon^{mnp}\,\epsilon^{bca} \\
(\tr\XXX)(\tr\YYY)
  &= \frac12\, \XXX_{abc}\,\YYY_{def} \,\epsilon^{abc}\,\epsilon^{def}
\end{align}
from which the components of ${*}(\XXX*\YYY)$ can also be worked out.  In the
special case where the components of $\XXX$ and $\YYY$ commute, contracting
both sides of~(\ref{epssq}) with $\XXX\otimes\YYY$ yields
\begin{equation}
\frac12 \XXX_c{}^m \YYY_d{}^n \,\epsilon_{amn} \,\epsilon^{bcd}
  = (\XXX*\YYY)_a{}^b
\label{cFreud}
\end{equation}
or equivalently
\begin{equation}
\bigl({*}(\XXX*\YYY)\bigr)_{abc}
  = \frac12 (\XXX_b{}^m \YYY_c{}^n - \XXX_c{}^m \YYY_b{}^n) \,\epsilon_{amn}
\end{equation}
providing two remarkably simple expressions for the Freudenthal product,
albeit only in a very special case.  We will return to this issue below.

\begin{lemma}
The action of $\phi\in\ee_6$ on cubies is given by
\begin{equation}
\XXX_a{}^m \epsilon_{mbc} \longmapsto
	\phi_a{}^m \XXX_m{}^n \epsilon_{nbc}
	+ \XXX_a{}^n \phi'_b{}^m \epsilon_{nmc}
	+ \XXX_a{}^n \phi'_c{}^m \epsilon_{nbm}
\label{e6cact}
\end{equation}
\label{e6lemma}
\end{lemma}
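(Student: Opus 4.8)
The plan is to start from the defining action~(\ref{e6act}) of $\phi\in\ee_6$ on the Albert algebra, written out in components, and then simply apply the Hodge dual~(\ref{Hodge}), tracking how each index transforms. In components, $\XXX\mapsto\phi\XXX+\XXX\phi^\dagger$ reads $\XXX_a{}^b\mapsto\phi_a{}^m\XXX_m{}^b+\XXX_a{}^m(\phi^\dagger)_m{}^b$. Contracting the form index with $\epsilon_{mbc}$ to build the cubie $(*\XXX)_{abc}=\XXX_a{}^m\epsilon_{mbc}$, the contribution of $\phi\XXX$ is immediately $\phi_a{}^m\XXX_m{}^n\epsilon_{nbc}$ after relabeling dummy indices, which is precisely the first term on the right of~(\ref{e6cact}). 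It then remains only to show that the contribution of $\XXX\phi^\dagger$ reproduces the last two terms.

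For that contribution I would first invoke~(\ref{phiadj}), namely $\phi^\dagger=-\phi'$, so that the second term equals $-\XXX_a{}^n\phi'_n{}^m\epsilon_{mbc}$. The key step is the purely combinatorial derivation identity
\[
M_n{}^m\epsilon_{mbc}+M_b{}^m\epsilon_{nmc}+M_c{}^m\epsilon_{nbm}=(\tr M)\,\epsilon_{nbc},
\]
valid for any $3\times3$ matrix $M$. This is verified by noting that the left-hand side is totally antisymmetric in $(n,b,c)$ — the construction treats the three slots symmetrically, and a single interchange is seen to flip the sign using the antisymmetry of $\epsilon$ — hence it is proportional to $\epsilon_{nbc}$, with the constant read off by setting $(n,b,c)=(1,2,3)$ to get $M_1{}^1+M_2{}^2+M_3{}^3=\tr M$.

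Applying this with $M=\phi'$, and using that $\phi'$ is tracefree — which holds because every $\phi\in\ee_6$ is a tracefree octonionic matrix and conjugate transposition preserves tracelessness — the right-hand side vanishes, giving $\phi'_n{}^m\epsilon_{mbc}=-\phi'_b{}^m\epsilon_{nmc}-\phi'_c{}^m\epsilon_{nbm}$. Left-multiplying by $\XXX_a{}^n$ and summing over $n$ then yields $-\XXX_a{}^n\phi'_n{}^m\epsilon_{mbc}=\XXX_a{}^n\phi'_b{}^m\epsilon_{nmc}+\XXX_a{}^n\phi'_c{}^m\epsilon_{nbm}$, which are exactly the remaining two terms of~(\ref{e6cact}). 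Conceptually, the identity expresses that a tracefree transformation of the single (dualized) index is equivalent to the induced transformation of the two antisymmetric indices, so the Hodge dual intertwines the action on the form index of $\XXX$ with the $\phi'$-action on the $2$-form indices of the cubie.

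I expect the only real obstacle to be bookkeeping, specifically making sure that no unwarranted associativity of the octonions is assumed when passing from the scalar identity to its $\XXX_a{}^n$-weighted form. This is harmless here: the only genuine octonion products appearing are the pairwise products $\XXX_a{}^n\phi'_n{}^m$ (and $\phi_a{}^m\XXX_m{}^n$ in the first term), while every $\epsilon$ is a central real scalar. Consequently left-multiplication by $\XXX_a{}^n$ is $\RR$-linear and distributes across the three terms of the derivation identity without ever regrouping three octonions, so the argument goes through verbatim over $\OO$.
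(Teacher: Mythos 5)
Your proof is correct and follows essentially the same route as the paper: the paper's proof likewise reduces everything, via $\phi'=-\phi^\dagger$, to the vanishing of the totally antisymmetric expression $Q_{nbc}=\phi'_n{}^m\epsilon_{mbc}+\phi'_b{}^m\epsilon_{nmc}+\phi'_c{}^m\epsilon_{nbm}=\tr(\phi')\,\epsilon_{nbc}$, exactly your derivation identity specialized to $M=\phi'$ with $\tr(\phi')=0$. Your added remarks (stating the identity for arbitrary $M$, and noting that only binary octonion products with central real $\epsilon$'s occur) are accurate refinements, not a departure.
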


\begin{proof}
Consider the expression
\begin{equation}
Q_{nbc} =
\phi'_n{}^m \epsilon_{mbc} + \phi'_b{}^m \epsilon_{nmc} + \phi'_c{}^m \epsilon_{nbm}
\end{equation}
which is completely antisymmetric, and hence vanishes unless $n$, $b$, $c$ are
distinct.  But then
\begin{equation}
Q_{nbc} = \tr(\phi') \,\epsilon_{nbc}
\label{trphi}
\end{equation}
which vanishes, since $\tr(\phi')=-\tr(\phi)=0$.  Thus,~(\ref{e6act}) becomes
\begin{align}
\XXX_a{}^m \epsilon_{mbc}
  &\longmapsto \bigl(\phi_a{}^n \XXX_n{}^m
	+ \XXX_a{}^n \phi^\dagger_n{}^m \bigr) \,\epsilon_{mbc} \nonumber\\
  &= \phi_a{}^n \XXX_n{}^m \,\epsilon_{mbc}
	+ \XXX_a{}^n \phi'_b{}^m \epsilon_{nmc}
	+ \XXX_a{}^n \phi'_c{}^m \epsilon_{nbm}
\end{align}
as claimed, where we have used both~(\ref{phiadj}) and~(\ref{trphi}).
\end{proof}
A similar result holds for the action of $\phi'$.

\section{\boldmath The Symplectic Structure of $\ee_7$}
\label{sp6}

\begin{figure}
\centering
\hfill
\includegraphics[height=2in]{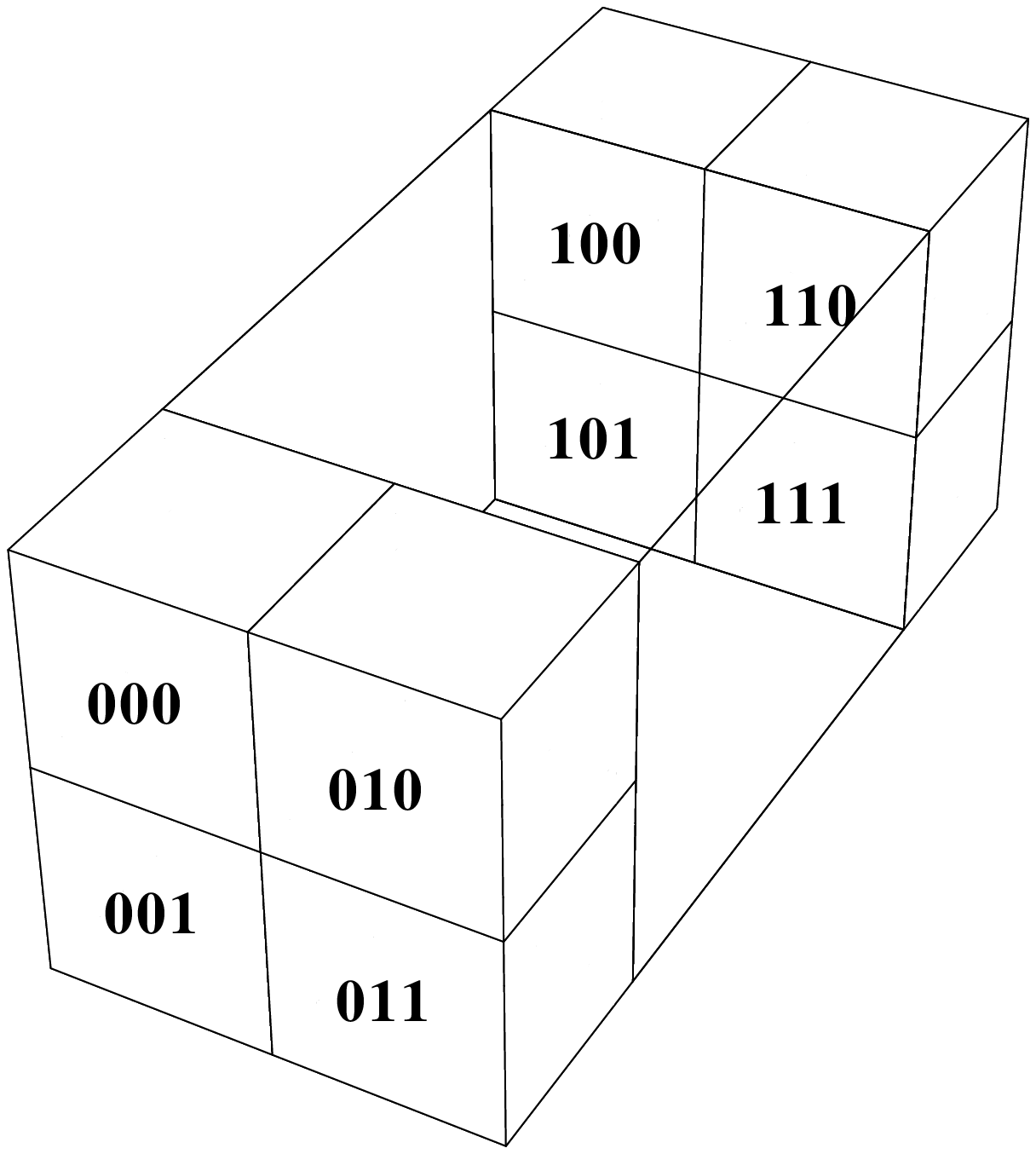}
\hfill
\includegraphics[height=2in]{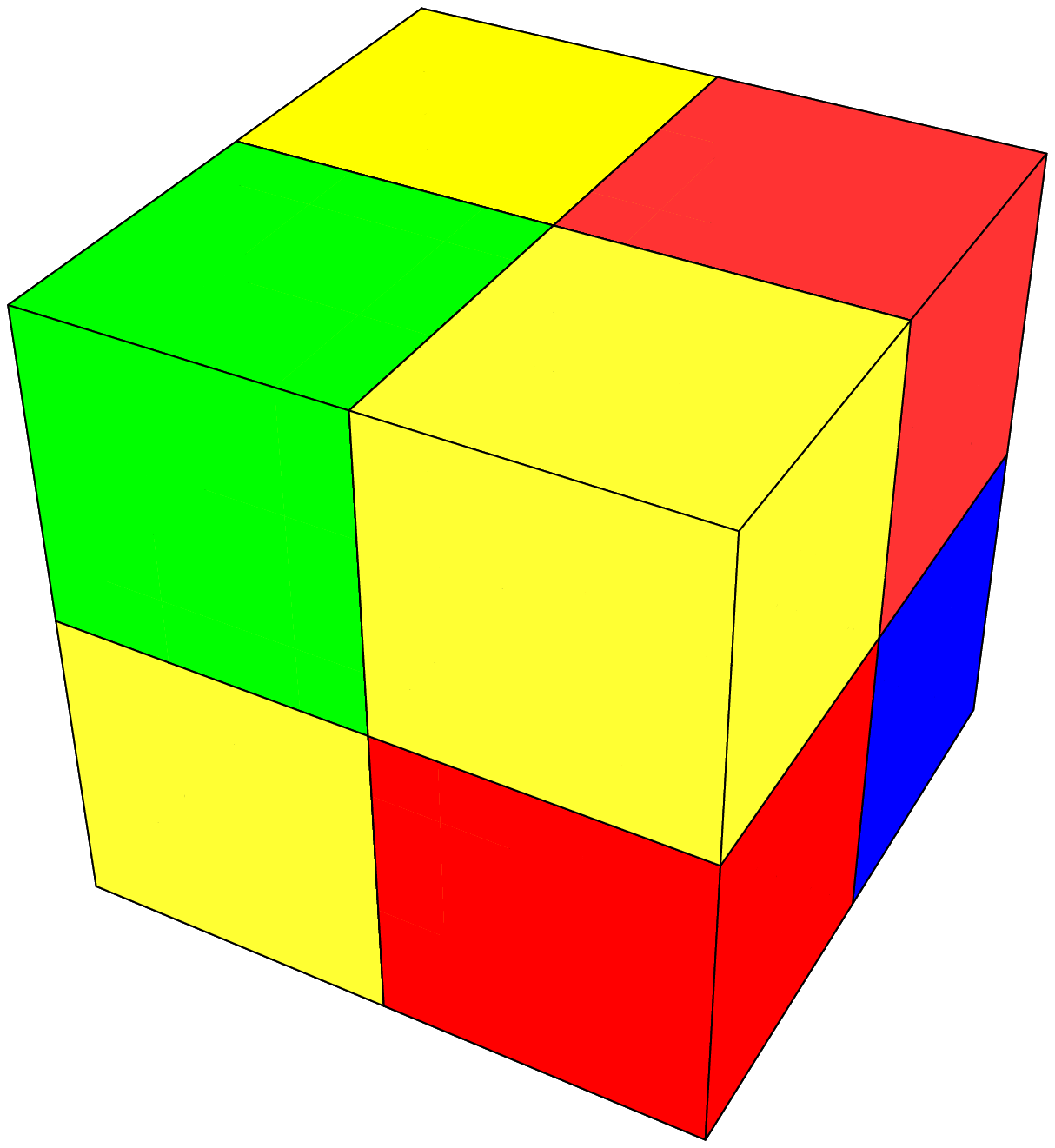}
\hfill\null
\caption{The block structure of a $6\times6\times6$ antisymmetric tensor in
terms of $3\times3\times3$ ``cubies''.  A binary labeling of the cubies is
shown on the pulled-apart cube on the left; on the right, cubies with similar
shading contain equivalent information.}
\label{cube}
\end{figure}

The representation~(\ref{ThetaDef}) can be written in block form, which we
also call~$\Theta$, namely
\footnote{The derivations $\gg_2\subset\ee_6$ require nested matrix
transformations of  the form~(\ref{e7mat}).}
\begin{equation}
\Theta =
\begin{pmatrix}
  \phi-\frac13\,\rho\,\III& \AAA\\
\noalign{\medskip}
  \BBB& \phi'+\frac13\,\rho\,\III
\label{e7mat}
\end{pmatrix}
\end{equation}
where $\III$ denotes the $3\times3$ identity matrix.  By analogy with
Section~\ref{sp4}, we would like $\Theta$ to act on ${*}\XXX$, which has 3
indices, and the correct symmetries to be an off-diagonal block of a rank 3
antisymmetric tensor $\Pc$, whose components make up a $6\times6\times6$ cube,
which we divide into $3\times3\times3$ cubies, as shown in Figure~\ref{cube};
compare Figure~\ref{square}.  We identity the diagonal cubies, labeled $000$
and $111$, with $p\,{*}\III$ and $q\,{*}\III$, respectively, the cubie labeled
$011$ with ${*}\XXX$, the cubie labeled $100$ with ${*}\YYY$, and then let
antisymmetry do the rest.  Explicitly, we have
\begin{equation}
\Pc_{abc} =
\begin{cases}
 p\, \epsilon_{abc} & a\le3,b\le3,c\le3 \\
 ({*}\YYY)_{\hat{a}bc} & a\ge4,b\le3,c\le3 \\
 ({*}\XXX)_{a\hat{b}\hat{c}} & a\le3,b\ge4,c\ge4 \\
 q\, \epsilon_{\hat{a}\hat{b}\hat{c}} & a\ge4,b\ge4,c\ge4
\end{cases}
\label{Pblocks}
\end{equation}
where we have introduced the convention that $\hat{a}=a-3$, and where the
remaining components are determined by antisymmetry.
\footnote{Note that $\Pc$ is a \textit{cube}, and has components $\Pc_{abc}$
with $a,b,c\in\{1,2,3,4,5,6\}$, whereas $\epsilon_{abc}$, ${*}\XXX_{abc}$, and
${*}\YYY_{abc}$ are the components of \textit{cubies}, which are subblocks of
$\Pc$, with $a,b,c\in\{1,2,3\}$.}

In the complex case, we could begin with the natural action of $\Theta$ on
6-component complex vectors, and then take the antisymmetric cube, that is, we
could consider the action
\begin{equation}
u\wedge v \wedge w \longmapsto
	\Theta u \wedge v \wedge w + u\wedge \Theta v \wedge w
	+ u\wedge v \wedge \Theta w
\label{wedge3}
\end{equation}
with $u,v,w\in\CC^6$, or equivalently
\begin{equation}
\Pc_{abc} \longmapsto
	\Theta_a{}^m \Pc_{mbc} + \Theta_b{}^m \Pc_{amc} + \Theta_c{}^m \Pc_{abm}
\label{Pact}
\end{equation}

\begin{lemma}
The action of the dilation $\Theta=(0,\rho,0,0)\in\ee_7$ on $\Pc$ is given
by~(\ref{Pact}).
\label{dilemma}
\end{lemma}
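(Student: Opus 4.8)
The plan is to prove the lemma by direct computation, comparing the right-hand side of~(\ref{Pact}) against Freudenthal's action~(\ref{FreudX})--(\ref{qFreud}) specialized to the pure dilation $\Theta=(0,\rho,0,0)$. First I would specialize the block matrix~(\ref{e7mat}) to this case: setting $\phi=\AAA=\BBB=0$ leaves the diagonal $6\times6$ matrix
\begin{equation}
\Theta_a{}^m = \frac13\,\rho\,\eta_a\,\delta_a{}^m,
\qquad
\eta_a =
\begin{cases} -1 & a\le3, \\ +1 & a\ge4. \end{cases}
\label{dilmat}
\end{equation}
Because $\Theta$ is diagonal, each of the three terms on the right of~(\ref{Pact}) merely rescales $\Pc_{abc}$ by the corresponding eigenvalue, so the action collapses to
\begin{equation}
\Pc_{abc} \longmapsto \frac13\,\rho\,(\eta_a+\eta_b+\eta_c)\,\Pc_{abc}.
\label{dileig}
\end{equation}
Thus the scaling factor depends only on how many of the indices $a,b,c$ lie in $\{4,5,6\}$.

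The second step is to evaluate~(\ref{dileig}) on the four defining blocks of~(\ref{Pblocks}) and match against~(\ref{FreudX})--(\ref{qFreud}). On the block $000$ (no index in $\{4,5,6\}$) the factor is $-\rho$, and since $\Pc_{abc}=p\,\epsilon_{abc}$ there this reproduces $p\mapsto-\rho\,p$, as in~(\ref{pFreud}). On the block $100$ (one such index) the factor is $-\frac13\rho$, giving $\YYY\mapsto-\frac13\,\rho\,\YYY$, as in~(\ref{FreudY}). On the block $011$ (two such indices) the factor is $+\frac13\rho$, giving $\XXX\mapsto\frac13\,\rho\,\XXX$, as in~(\ref{FreudX}). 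On the block $111$ (three such indices) the factor is $+\rho$, giving $q\mapsto\rho\,q$, as in~(\ref{qFreud}).

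Finally, since the eigenvalue in~(\ref{dileig}) is symmetric under permutations of $a,b,c$, every block obtained from these four by antisymmetry is scaled by the same factor, determined solely by its number of high indices; the prescription~(\ref{Pact}) is therefore consistent with the total antisymmetry of $\Pc$ and reproduces Freudenthal's dilation action on all of $\Pc$. I do not expect a genuine obstacle here, since the computation is entirely routine; the only thing requiring care is the bookkeeping of signs and of the index-range counts that convert~(\ref{dileig}) into the four scalars $-\rho$, $-\frac13\rho$, $+\frac13\rho$, $+\rho$ attached to $p$, $\YYY$, $\XXX$, $q$, respectively.
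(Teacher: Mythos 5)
Your proof is correct and follows essentially the same route as the paper's: both read off the diagonal form of $\Theta$ from~(\ref{e7mat}), observe that~(\ref{Pact}) then rescales each block of~(\ref{Pblocks}) by $\frac13\rho$ times a signed count of ``large'' indices, and match the resulting factors $-\rho$, $-\frac13\rho$, $+\frac13\rho$, $+\rho$ against~(\ref{FreudX})--(\ref{qFreud}). Your version is if anything slightly tidier, since the explicit eigenvalue $\frac13\rho(\eta_a+\eta_b+\eta_c)$ makes the permutation symmetry (and hence consistency with the antisymmetry of $\Pc$) manifest, and you correctly state $q\mapsto\rho\,q$ where the paper's proof contains the typo $q\mapsto+\rho\,p$.
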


\begin{proof}
From~(\ref{e7mat}), we have
\begin{equation}
\Theta_a{}^b = \pm\frac13\,\rho\,\delta_a{}^b
\end{equation}
with the sign being negative for $a=b\le3$ and positive for $a=b\ge3$.
Thus,~(\ref{Pact}) becomes
\begin{equation}
\Pc_{abc} \longmapsto
	\pm\frac13\,\rho\,\Pc_{abc} \pm\frac13\,\rho\,\Pc_{abc}
	\pm\frac13\,\rho\,\Pc_{abc}
\end{equation}
where the signs depend on which of $a$, $b$, $c$ are ``small'' ($\le3$) or
``large'' ($\ge4$).  Examining~(\ref{Pblocks}), it is now easy to see that
$p\mapsto-\rho\,p$, $q\mapsto+\rho\,p$, $\XXX\mapsto+\frac\rho3\XXX$, and
$\YYY\mapsto-\frac\rho3\YYY$, exactly as required
by~(\ref{FreudX})--(\ref{qFreud}).
\end{proof}

\begin{lemma}
If the elements of $\AAA,\BBB\in\HHH$ commute with those of $\Pc$, then the
action of the translations $\Theta=(0,0,\AAA,0)$ and $\Theta=(0,0,0,\BBB)$ on
$\Pc$ is given by~(\ref{Pact}).
\label{trlemma}
\end{lemma}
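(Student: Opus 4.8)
The plan is to substitute the block form~(\ref{e7mat}) of $\Theta$ into the natural cube action~(\ref{Pact}) and evaluate the result one block at a time, matching each block of the image against the corresponding Freudenthal rule~(\ref{FreudX})--(\ref{qFreud}). For the translation $\Theta=(0,0,\AAA,0)$, the only nonzero entries of the $6\times6$ matrix are $\Theta_a{}^m=\AAA_a{}^{\hat{m}}$ with $a\le3$ and $m\ge4$; thus in each of the three terms of~(\ref{Pact}) the slot index being differentiated must be ``small'' for that term to survive, and it is raised to a ``large'' summed index. The bookkeeping therefore consists of determining, block by block from~(\ref{Pblocks}), which small/large patterns the surviving $\Pc$-components carry, and reducing each such pattern to one of the four defining blocks by antisymmetry.

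For the all-small block ($a,b,c\le3$) all three terms contribute, each producing an entry of the matrix product $\AAA\YYY$ contracted with an $\epsilon$ tensor; collecting them and applying the elementary identity $M_a{}^p\epsilon_{pbc}+M_b{}^p\epsilon_{apc}+M_c{}^p\epsilon_{abp}=(\tr M)\,\epsilon_{abc}$ with $M=\AAA\YYY$ collapses the block to $\tr(\AAA\YYY)\,\epsilon_{abc}$. Commutativity then lets me identify $\tr(\AAA\YYY)$ with $\tr(\AAA\circ\YYY)$, reproducing~(\ref{pFreud}) with $\rho=0$. The all-large block ($a,b,c\ge4$) receives no contribution at all, since no slot index is small, so $q$ is unchanged as required by~(\ref{qFreud}). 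For the block $011$ ($a\le3$, $b,c\ge4$) only the first term survives, landing on the all-large ($111$) component $q\,\epsilon$, and it yields $q\,\AAA_a{}^n\epsilon_{n\hat{b}\hat{c}}=\bigl({*}(\AAA\,q)\bigr)_{a\hat{b}\hat{c}}$, i.e.\ $\XXX\mapsto\AAA\,q$, matching~(\ref{FreudX}). None of these three blocks uses the commutativity hypothesis.

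The substantive step, and the expected main obstacle, is the block $100$ ($a\ge4$, $b,c\le3$), which must reproduce the Freudenthal product $\YYY\mapsto2\,\AAA*\XXX$ of~(\ref{FreudY}). Here the first term vanishes and the remaining two land on the mixed patterns $110$ and $101$; reducing these to the defining block $011=({*}\XXX)$ by antisymmetry and relabelling the $\epsilon$ dummy indices, I expect the block to collapse to $(\AAA_b{}^m\XXX_c{}^n-\AAA_c{}^m\XXX_b{}^n)\,\epsilon_{\hat{a}mn}$. By the commuting-case expression for the Freudenthal product equivalent to~(\ref{cFreud}), namely $\bigl({*}(\XXX*\YYY)\bigr)_{abc}=\frac12(\XXX_b{}^m\YYY_c{}^n-\XXX_c{}^m\YYY_b{}^n)\,\epsilon_{amn}$, this is exactly $\bigl({*}(2\,\AAA*\XXX)\bigr)_{\hat{a}bc}$. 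This is the one place where the hypothesis that $\AAA$ commute with $\XXX$ is indispensable, since that identity holds only when the octonionic entries commute; the careful tracking of signs through the antisymmetric reductions is the only delicate part of the argument.

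Finally, the translation $\Theta=(0,0,0,\BBB)$ follows by the same computation under the evident symmetry exchanging $\XXX\leftrightarrow\YYY$, $p\leftrightarrow q$, $\AAA\leftrightarrow\BBB$ and small $\leftrightarrow$ large indices, which interchanges the roles of the $000$/$111$ and $011$/$100$ blocks; the block $011$ then carries the Freudenthal product $\XXX\mapsto2\,\BBB*\YYY$ of~(\ref{FreudX}) and again requires commutativity, while the all-small block reproduces $q\mapsto\tr(\BBB\circ\XXX)$ of~(\ref{qFreud}). Together with Lemma~\ref{dilemma} and the action of $\phi\in\ee_6$ on cubies from Lemma~\ref{e6lemma}, this verifies~(\ref{Pact}) on the translation generators and completes the identification of the cube action with Freudenthal's action in the commuting case.
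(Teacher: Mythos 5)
Your proof is correct and matches the paper's own argument essentially step for step: the same lowering-operator bookkeeping ($q\mapsto\XXX\mapsto\YYY\mapsto p$ with the $111$ cubie untouched), the same per-cubie computations for the $011$, $000$, and $100$ blocks, and the same decisive use of the commuting-case Freudenthal identity~(\ref{cFreud}) in the $100$ block (your trace identity $M_a{}^p\epsilon_{pbc}+M_b{}^p\epsilon_{apc}+M_c{}^p\epsilon_{abp}=(\tr M)\,\epsilon_{abc}$ is just the paper's contraction via~(\ref{eps6}) and~(\ref{sinv}) in disguise). One small internal slip worth fixing: your remark that ``none of these three blocks uses the commutativity hypothesis'' contradicts your own preceding sentence, since passing from $\tr(\AAA\YYY)$ to $\tr(\AAA\circ\YYY)$ in the all-small block does use commutativity, exactly as the paper notes.
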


\begin{proof}
Set $\Theta=(0,0,\AAA,0)$ and consider the action of $\Theta$ on $p$, $\XXX$,
$\YYY$, and $q$, needing to verify~(\ref{FreudX})--(\ref{qFreud}) with
$\phi=0$, $\rho=0$, and $\BBB=0$.  From~(\ref{e7mat}), we have
\begin{equation}
\Theta_a{}^b =
\begin{cases}
\AAA_a{}^{\hat{b}} & a\le3, b\ge4 \\
0 & \hbox{otherwise}
\end{cases}
\end{equation}
Since $\AAA$ has one ``small'' index and one ``large'' index, it acts as a
lowering operator, e.g.\ mapping cubie $100$ to $000$, and thus maps
$q\mapsto\XXX\mapsto\YYY\mapsto p$.  In particular, this confirms the lack of
a term involving $\AAA$ in~(\ref{qFreud}).
Considering terms involving $q$, we look at cubie $011$, where the only
nonzero term of~(\ref{Pact}) is
\begin{equation}
({*}\XXX)_{abc} \longmapsto \AAA_a{}^m q\,\epsilon_{mbc} = q\,({*}\AAA)_{abc}
\end{equation}
which verifies~(\ref{FreudX}) in this case.

We next look at cubie $000$, where~(\ref{Pact}) becomes
\begin{equation}
p\,\epsilon_{abc} \longmapsto 
	\AAA_a{}^m ({*}\YYY)_{mbc} + \AAA_b{}^m ({*}\YYY)_{mca}
	+ \AAA_c{}^m ({*}\YYY)_{mab}
\end{equation}
which is clearly antisymmetric, so we can use~(\ref{eps6}) and~(\ref{sinv}) to
obtain
\begin{equation}
p \longmapsto \frac12\, \AAA_a{}^m ({*}\YYY)_{mbc} \,\epsilon^{abc}
  = \AAA_a{}^m \YYY_m{}^a = \tr(\AAA\YYY) = \tr(\AAA\circ\YYY)
\end{equation}
which is~(\ref{pFreud}), where we have used commutativity only in the last
equality.

Finally, turning to cubie $100$,~(\ref{Pact}) becomes
%\begin{equation}
%\Pc_{\hat{a}bc}
%  \longmapsto \Theta_b{}^{\hat{m}} \Pc_{\hat{a}\hat{m}c}
%	+ \Theta_c{}^{\hat{m}} \Pc_{\hat{a}b\hat{m}}
%  = \Theta_b{}^{\hat{m}} \Pc_{c\hat{a}\hat{m}}
%	+ \Theta_c{}^{\hat{m}} \Pc_{b\hat{m}\hat{a}}
%\end{equation}
%or in other words
\begin{align}
({*}\YYY)_{abc}
  &\longmapsto \AAA_b{}^m ({*}\XXX)_{cam} + \AAA_c{}^m ({*}\XXX)_{bma} \nonumber\\
  &\qquad= \AAA_b{}^m \XXX_c{}^n \,\epsilon_{nam}
	+ \AAA_c{}^m \XXX_b{}^n \,\epsilon_{nma}
\end{align}
or equivalently, using~(\ref{sinv}) and~(\ref{cFreud}),
\begin{equation}
2\, \YYY_a{}^b
  \longmapsto 2\, \AAA_e{}^m \XXX_f{}^n \,\epsilon_{amn} \,\epsilon^{bef}
  = 4\, (\XXX*\YYY)_a{}^b
\end{equation}
which is~(\ref{FreudY}).

This entire argument can be repeated with only minor changes if
$\Theta=(0,0,0,\BBB)$.
\end{proof}

Over $\RR$ or $\CC$, we're done; Lemmas~\ref{e6lemma},~\ref{dilemma},
and~\ref{trlemma} together suffice to show that the action~(\ref{Pact}) is the
same as the Freudenthal action~(\ref{FreudX})--(\ref{qFreud}).  Unfortunately,
the action~(\ref{Pact}) fails to satisfy the Jacobi identity over $\HH$ or
$\OO$.  However, we can still use Lemmas~\ref{e6lemma},~\ref{dilemma},
and~\ref{trlemma} to reproduce the Freudenthal action in those cases, as
follows.

\begin{lemma}
The action of $\Theta=(\phi,0,0,0)\in\ee_7$ on $\Pc$ is determined by
\begin{equation}
\Pc_{abc} \longmapsto
	\Theta_a{}^m \Pc_{mbc} + \Pc_{amc} \Theta_b{}^m + \Pc_{abm} \Theta_c{}^m
\label{cubeact}
\end{equation}
when acting on elements of the form~(\ref{Pblocks}), which extends to all
of $\ee_7$ by antisymmetry.
\label{e6lemma2}
\end{lemma}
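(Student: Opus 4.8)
The plan is to check that the prescription~(\ref{cubeact}), applied to the four distinguished cubies of $\Pc$ listed in~(\ref{Pblocks}), reproduces the Freudenthal action~(\ref{FreudX})--(\ref{qFreud}) with $\rho=\AAA=\BBB=0$, i.e.\ $\XXX\mapsto\phi(\XXX)$, $\YYY\mapsto\phi'(\YYY)$, $p\mapsto0$, and $q\mapsto0$. The first thing I would record is the matrix shape of $\Theta=(\phi,0,0,0)$: by~(\ref{e7mat}) it is block diagonal, with $\Theta_a{}^b=\phi_a{}^b$ for $a,b\le3$, $\Theta_a{}^b=\phi'_{\hat a}{}^{\hat b}$ for $a,b\ge4$, and $\Theta_a{}^b=0$ otherwise. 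I would then note the key structural consequence: in each of the three terms of~(\ref{cubeact}) the summed index $m$ is forced into the same size-class ($\le3$ or $\ge4$) as the free index it replaces, so that $\Theta$ maps each canonical cubie into a cubie of the same type. This is what lets me read off the transformed $\XXX$, $\YYY$, $p$, and $q$ block by block, and it is also why no cross term (e.g.\ an $\AAA$- or $\BBB$-type contribution) can appear.

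Next I would treat the cubie labelled $011$, namely $({*}\XXX)_{a\hat b\hat c}$. Inserting the block-diagonal $\Theta$ into~(\ref{cubeact}), the first term contributes $\phi_a{}^m\XXX_m{}^n\epsilon_{n\hat b\hat c}$, while the second and third terms contribute $\XXX_a{}^n\phi'_{\hat b}{}^{\hat m}\epsilon_{n\hat m\hat c}$ and $\XXX_a{}^n\phi'_{\hat c}{}^{\hat m}\epsilon_{n\hat b\hat m}$, the real Levi-Civita symbols being freely commuted past the octonionic factors. This is term-for-term the content of Lemma~\ref{e6lemma}, equation~(\ref{e6cact}), so the $011$ cubie transforms exactly as the Hodge dual of $\phi(\XXX)$, giving $\XXX\mapsto\phi(\XXX)$. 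The same calculation on the cubie $100=({*}\YYY)_{\hat a bc}$, now invoking $(\phi')'=\phi$, reproduces the $\phi'$-analogue of~(\ref{e6cact}) recorded just after the proof of Lemma~\ref{e6lemma}, and hence yields $\YYY\mapsto\phi'(\YYY)$.

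For the diagonal cubies $000=p\,\epsilon_{abc}$ and $111=q\,\epsilon_{\hat a\hat b\hat c}$ I would pull the scalar out front; what remains from~(\ref{cubeact}) is the completely antisymmetric combination $\phi_a{}^m\epsilon_{mbc}+\phi_b{}^m\epsilon_{amc}+\phi_c{}^m\epsilon_{abm}$ (and its $\phi'$ counterpart). By the $Q_{nbc}$ argument used inside the proof of Lemma~\ref{e6lemma}, this combination equals $\tr(\phi)\,\epsilon_{abc}$, which vanishes because every element of $\ee_6$ is tracefree; hence $p\mapsto0$ and $q\mapsto0$, as required by~(\ref{pFreud})--(\ref{qFreud}). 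Collecting the four blocks gives precisely the Freudenthal action for $\Theta=(\phi,0,0,0)$; since each canonical cubie is sent to a cubie of its own type, the images assemble into a tensor of the form~(\ref{Pblocks}), whose remaining components are fixed by antisymmetry. Together with Lemmas~\ref{dilemma} and~\ref{trlemma}, where the dilation and (commuting) translations are insensitive to the left/right placement of $\Theta$ so that~(\ref{cubeact}) agrees with~(\ref{Pact}), this is what lets~(\ref{cubeact}) serve as the action on all of $\ee_7$.

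The step carrying the real content is the off-diagonal cubie computation, and in particular the placement of $\Theta$ in~(\ref{cubeact}). Under the naive symmetric rule~(\ref{Pact}), with $\Theta$ always on the left, the octonionic factors $\phi'$ would end up on the \emph{left} of $\XXX$ in the second and third terms, whereas Lemma~\ref{e6lemma} demands them on the \emph{right}; over $\HH$ and $\OO$ these disagree, which is exactly the Jacobi failure noted above. So the crux is the bookkeeping verifying that~(\ref{cubeact}), with $\Theta$ moved to the right in its last two terms, matches the ordering in~(\ref{e6cact}) precisely. The diagonal cubies, by contrast, involve only real factors and present no such difficulty.
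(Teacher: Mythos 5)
Your proof is correct and follows essentially the same route as the paper's: insert the block-diagonal form of $\Theta$ from~(\ref{e7mat}) into~(\ref{cubeact}) to recover~(\ref{e6cact}) on the $011$ and $100$ cubies, kill the diagonal cubies via the $Q_{nbc}$ antisymmetry/tracefree argument from Lemma~\ref{e6lemma}, and extend to the remaining cubies by antisymmetry. Your added observation about why the right-placement of $\Theta$ in the last two terms is forced over $\HH$ and $\OO$ is a correct gloss on the paper's motivation, not a deviation in method.
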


\begin{proof}
From~(\ref{e7mat}), we have
\begin{equation}
\Theta_a{}^b =
\begin{cases}
\phi_a{}^b & a\le3, b\le3 \\
\phi'_{\hat{a}}{}^{\hat{b}} & a\ge4, b\ge4 \\
0 & \hbox{otherwise}
\end{cases}
\label{Thetaphi}
\end{equation}
Inserting~(\ref{Thetaphi}) into~(\ref{cubeact}) now yields
precisely~(\ref{e6cact}) when acting on $\XXX$; the argument for the action on
$\YYY$ is similar.  Furthermore, using a argument similar to that used to
prove Lemma~\ref{e6lemma} to begin with,~(\ref{Pact}) acts on $p$ via
\begin{equation}
p\,\epsilon_{abc} \longmapsto
	\phi_a{}^m p\,\epsilon_{mbc} + \phi_b{}^m p\,\epsilon_{amc}
	+ \phi_c{}^m p\,\epsilon_{abm}
\end{equation}
which is completely antisymmetric in $a$, $b$, $c$, and therefore proportional
to $\tr(\phi)=0$.  The argument for the action on $q$ is similar, with $\phi$
replaced by $\phi'$.  Although~(\ref{cubeact}) itself is only antisymmetric in
its last two indices, that suffices to define an action on cubies $000$, $011$,
$100$, and $111$; the action on the remaining 4 cubies is uniquely determined
by requiring that antisymmetry be preserved.
\end{proof}

We now have all the pieces, and can state our main result.

\begin{theorem}
The Lie algebra $\ee_7$ acts symplectically on cubes, that is,
$\ee_6\subset\ee_7$ acts on cubes via~(\ref{cubeact}), as do real translations
and the dilation, and all other $\ee_7$ transformations can then be constructed
from these transformations using linear combinations and commutators.
\label{Thm}
\end{theorem}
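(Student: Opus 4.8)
The plan is to assemble the three verified building blocks and then argue that they generate all of $\ee_7$ by commutators. Lemma~\ref{e6lemma2} already shows that the full subalgebra $\ee_6\subset\ee_7$ acts on cubes via~(\ref{cubeact}), reproducing the Freudenthal action~(\ref{FreudX})--(\ref{qFreud}); Lemma~\ref{dilemma} does the same for the dilation, and Lemma~\ref{trlemma} does the same for those translations $\Theta=(0,0,\AAA,0)$ and conformal translations $\Theta=(0,0,0,\BBB)$ whose entries are real, since real entries automatically commute with the octonionic entries of $\Pc$ and so satisfy the hypothesis of that lemma. The key observation that lets us promote these partial results to a statement about all of $\ee_7$ is that Freudenthal's action is a genuine representation of $\ee_7$: for any two generators $X,Y$ on which the cube action has already been matched to Freudenthal's, the cube operator assigned to the bracket $[X,Y]$ is forced to be the operator commutator of the two cube operators, and this automatically agrees with Freudenthal's action on $[X,Y]$. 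Thus agreement on a generating set propagates, for free, to the subalgebra generated by that set.

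It therefore remains only to show that $\ee_6$, the dilation, and the real translations and real conformal translations generate all of $\ee_7$ under linear combinations and commutators. First I would record the graded bracket relations: in $\ee_7$ the bracket of $\phi\in\ee_6$ with a translation $\Theta=(0,0,\AAA,0)$ is again a translation, obtained by letting $\phi$ act on $\AAA$ through~(\ref{e6act}). Starting from the six-dimensional space of real symmetric matrices and repeatedly bracketing with $\ee_6$ therefore sweeps out the entire $\ee_6$-submodule of $\HHH$ generated by the real translations. Since the $27$-dimensional minimal representation $\HHH$ of $\ee_6=\ee_{6(-26)}$ is irreducible and the real subspace is nonzero, this submodule is all of $\HHH$, so every translation is reached. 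The identical argument applied to $\Theta=(0,0,0,\BBB)$ produces every conformal translation. Together with $\ee_6$ and the dilation this accounts for all $78+1+27+27=133$ dimensions of $\ee_7$, completing the generation claim.

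The hard part is conceptual rather than computational, and it is exactly the difficulty already flagged in the text: over $\HH$ and $\OO$ the uniform formula~(\ref{Pact}) fails the Jacobi identity, so it cannot be used to \emph{define} the action of an arbitrary element of $\ee_7$ directly. The whole point of the commutator construction is to sidestep this, and I would emphasize that we never assign a cube operator to a non-commuting translation by plugging into~(\ref{Pact}); instead that operator is defined as a commutator of operators already matched to the Freudenthal action, with consistency guaranteed because that action is a bona fide Lie algebra homomorphism. The one external input that must be invoked carefully is the irreducibility of the minimal representation of $\ee_6$, which is what guarantees that iterated commutators with $\ee_6$ actually reach every translation rather than stalling in a proper submodule; I would cite this standard fact rather than reprove it. With it in hand, Lemmas~\ref{e6lemma2},~\ref{dilemma}, and~\ref{trlemma} together with the spanning argument yield the theorem.
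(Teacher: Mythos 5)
Your proposal is correct, and its first half matches the paper: you assemble Lemmas~\ref{e6lemma2}, \ref{dilemma}, and~\ref{trlemma} exactly as the paper does, and you make explicit a consistency point the paper leaves implicit, namely that because Freudenthal's action is a bona fide Lie algebra representation, operators defined as commutators of already-matched operators automatically agree with Freudenthal's action on the corresponding brackets. Where you genuinely diverge is the generation step. You invoke irreducibility of the 27-dimensional minimal representation of $\ee_{6(-26)}$: bracketing $\ee_6$ against the nonzero space of real translations sweeps out an $\ee_6$-submodule of $\HHH$, which by irreducibility must be all of $\HHH$. The paper instead gives a completely explicit, self-contained computation: taking $\QQQ\in\ee_6$ to be a boost (Hermitian, tracefree) and using the identity $-(\AAA\circ\BBB)*\XXX=\bigl(\BBB-\tr(\BBB)\III\bigr)\circ(\AAA*\XXX)+\AAA*(\BBB\circ\XXX)$, it computes $\bigl[(0,0,\AAA,0),(\QQQ,0,0,0)\bigr]=(0,0,\AAA\circ\QQQ,0)$, so with $\AAA=\III$ a \emph{single} commutator $\bigl[(0,0,\III,0),(\QQQ,0,0,0)\bigr]$ already produces the null translation $(0,0,\QQQ,0)$ for every tracefree $\QQQ$; together with the real translation $(0,0,\III,0)$ itself this spans all 27 translations, and a parallel argument handles the $\BBB$ sector. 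Your route buys brevity and conceptual clarity at the cost of an external input (irreducibility, which you rightly propose to cite rather than reprove), and it only shows translations are reached by iterated brackets; the paper's route needs no representation theory and exhibits each missing generator as one concrete commutator of a real translation with a boost, in keeping with its computational style. One small point to watch in your write-up: Lemma~\ref{e6lemma2} covers only the matrix generators of $\ee_6$ (boosts and rotations), with the $\gg_2$ derivations arising as nested commutators; your propagation argument does handle them, but this deserves an explicit sentence.
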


\begin{proof}
Lemmas~\ref{dilemma} and~\ref{trlemma} are unchanged by the use
of~(\ref{cubeact}) rather than~(\ref{Pact}), since the components of $\Theta$
commute with those of $\Pc$ in both cases, and Lemma~\ref{e6lemma2} verifies
that $\ee_6$ acts via~(\ref{cubeact}), as claimed.  It only remains to show
that the remaining generators of $\ee_7$ can be obtained from these elements
via commutators.

Using~(\ref{FreudX})--(\ref{qFreud}), it is straightforward to compute the
commutator of two $\ee_7$ transformations of the form~(\ref{ThetaDef}).
Letting $\phi=\QQQ\in\ee_6$ be a boost, so that $\QQQ^\dagger=\QQQ$ and
$\tr(\QQQ)=0$, and using the identity
\begin{equation}
-(\AAA\circ\BBB)*\XXX
  = \bigl(\BBB-\tr(\BBB)\III\bigr)\circ(\AAA*\XXX) + \AAA*(\BBB\circ\XXX)
\end{equation}
for any $\AAA,\BBB,\XXX\in\HHH$, we obtain
\begin{equation}
\bigl[(0,0,\AAA,0),(\QQQ,0,0,0)\bigr] = (0,0,\AAA\circ\QQQ,0)
\end{equation}
We can therefore obtain the null translation $(0,0,\QQQ,0)$ for any
\textit{tracefree} Albert algebra element $\QQQ$ as the commutator of
$(0,0,\III,0)$ and $(\QQQ,0,0,0)$; a similar argument can be used to construct
the null translation $(0,0,0,\QQQ)$.
\end{proof}

Thus, \textit{all} generators of $\ee_7$ can be implemented either as a
symplectic transformation on cubes via~(\ref{cubeact}), or as the commutator
of two such transformations.

\section{Discussion}
\label{discuss}

We have showed that the algebraic description of the minimal representation of
$e_7$ introduced by Freudenthal naturally corresponds geometrically to a
symplectic structure.  Along the way, we have emphasized both the similarities
and differences between $\ee_7$ and $\so(10,2)$.  Both of these algebras are
\textit{conformal}; their elements divide naturally into generalized rotations
($\ee_6$ or $\so(9,1)$, respectively), translations, and a dilation.  Both act
naturally on a representation built out of vectors ($3\times3$ or $2\times2$
Hermitian octonionic matrices, respectively), together with two additional
real degrees of freedom ($p$ and $q$).  In the $2\times2$ case, the
representation~(\ref{Pdef}) contains just one vector; in the $3\times3$
case~(\ref{Pcdef}), there are two.  This at first puzzling difference is fully
explained by expressing both representations as antisymmetric tensors, as
in~(\ref{squarie}) and~(\ref{Pblocks}), respectively, and as shown
geometrically in Figures~\ref{square} and~\ref{cube}.

In the complex case, we have shown that the symplectic action~(\ref{Pact})
exactly reproduces the Freudenthal action~(\ref{FreudX})--(\ref{qFreud}).  The
analogy goes even further.  In $2n$ dimensions, there is a natural map taking
two $n$-forms to a $2n\times 2n$ matrix.  When acting on $\Pc$, this map takes
the form
\begin{equation}
\Pc \longmapsto \Pc_{acd}\Pc_{efb}\,\epsilon^{acdefb}
\label{PstarP}
\end{equation}
where $\epsilon$ now denotes the volume element in six dimensions, that is,
the completely antisymmetric tensor with $\epsilon^{123456}=1$.  It is not
hard to verify that, in the complex case,~(\ref{PstarP}) is (a multiple of)
% ($-48$ times)
$\Pc*\Pc$, as given by~(\ref{PstarP0})--(\ref{PstarP4}).  Similarly, the
quartic invariant~(\ref{quartic}) can be expressed in the complex case as
\begin{equation}
%\Pc_{j_1i_1i_2}\Pc_{i_3i_4i_5}\Pc_{i_6j_2j_3}\Pc_{j_4j_5j_6}
%	\,\epsilon^{i_1i_2i_3i_4i_5i_6}\,\epsilon^{j_1j_2j_3j_4j_5j_6}
J \sim \Pc_{gab}\Pc_{cde}\Pc_{fhi}\Pc_{jkl}
	\,\epsilon^{abcdef}\,\epsilon^{ghijkl}
\label{quarticC}
\end{equation}
up to an overall factor.
% of 3456.

Neither the form of the action~(\ref{Pact}), nor the
expressions~(\ref{PstarP}) and~(\ref{quarticC}), hold over $\HH$ or $\OO$.
This failure should not be a surprise, as trilinear tensor products are not
well defined over $\HH$, let alone $\OO$.  Nonetheless, Theorem~\ref{Thm} does
tell us how to extend~(\ref{Pact}) to the octonions.  Although it is also
possible to write down versions of~(\ref{PstarP}) and~(\ref{quarticC}) that
hold over the octonions, by using case-dependent algorithms to determine the
order of multiplication, it is not clear that such expressions have any
advantage over the original expressions~(\ref{PstarP0})--(\ref{PstarP4})
and~(\ref{quartic}) given by Freudenthal.

Despite these drawbacks, it is clear from our construction that $\ee_7$ should
be regarded as a natural generalization of the traditional notion of a
symplectic Lie algebra, and fully deserves the name $\sp(6,\OO).$

\section*{Acknowledgments}

We thank John Huerta for discussions, and for coining the term ``cubies''.
This work was supported in part by the John Templeton Foundation.

\goodbreak

%\bibliographystyle{unsrt}
%%\let\TDbib=1\bibliographystyle{TDbib}
%%\bibliography{Octonions,octonions}
%\bibliography{octobib}

\end{document}